\def\section{\@startsection{section}{1}%
  \z@{1.1\linespacing\@plus\linespacing}{.8\linespacing}%
  {\normalfont\Large\scshape\centering}}
\theoremstyle{plain}
\newtheorem*{MT}{Main Theorem}
\newtheorem*{conj*}{Root Groups Conjecture}
\newtheorem*{thm1.2}{(1.2) Theorem}
\newtheorem*{thm1.3}{(1.3) Theorem}
\newtheorem*{thm1.4}{(1.4) Theorem}
\newtheorem*{prop*}{Proposition}
\newtheorem{prop}{Proposition}[section]
\newtheorem{lemma}[prop]{Lemma}
\theoremstyle{definition}
\newtheorem*{Def*}{Definition}
\newtheorem{Defs}[prop]{Definitions}
\newtheorem*{notation*}{Notation}
\newtheorem{remark}[prop]{Remark}
\newtheorem*{remark*}{Remark}
\newcommand{\one}{{\bf 1}}
\numberwithin{equation}{section}
\begin{document}
\title[]{Alternative rings whose associators are not zero-divisors}
\author[Kleinfeld and Segev]{Erwin Kleinfeld\qquad Yoav Segev}
\address{Yoav Segev \\
         Department of Mathematics \\
        Ben-Gurion University \\
        Beer-Sheva 84105 \\
         Israel}
\email{yoavs@math.bgu.ac.il}

\address{Erwin Kleinfeld \\
1555 N.~Sierra St.~Apt 120, Reno, NV 89503-1719, USA}
\email{erwinkleinfeld@gmail.com}
\keywords{Octonion algebra, Alternative ring, commutator, associator}
\subjclass[2000]{Primary: 17D05 ; Secondary: 17A35}

\begin{abstract}
The purpose of this short note is to prove that if
$R$ is an alternative ring whose associators are not zero-divisors,
then $R$ has no zero divisors.  By a result of Bruck and Kleinfeld,
if, in addition, the characteristic of $R$ is not $2,$ then 
the central quotient of $R$ is an octonion division algebra over some field.
\end{abstract}

\date{\today}
\maketitle

%
\section{Introduction}

In this paper we charaterize  proper (i.e.~not associative) alternative rings 
whose central quotient (in the sense of \cite{BK}) is an octonion division algebra over some field,
as in the Main Theorem.
This adds to another recent characterization that we proved in \cite{KS}. 
For historical information on the octonions see \cite{Bl, E},
and for their connection with algebraic groups see \cite{SV}.

Before we state our main result, we draw the
attention of the reader to Remark \ref{rem 3.2} with
regards to the notion of a zero divisor.

\begin{MT}\label{MT}
Let $R$ be an alternative, not associative ring.
Assume that
a non-zero  associator in $R$  is not a divisor of zero. Then
\begin{itemize}
\item[(a)]
$R$ contains no divisors of zero.

\item[(b)]
If, in addition, the characteristic of $R$ is not $2,$
then the central quotient ring $R//C$
is an $8$-dimensional octonion division algebra
over its center--the fraction field of the
center $C$ of $R.$
\end{itemize}
\end{MT}

%
\section{Preliminaries on alternative rings}

Our main reference for alternative rings is \cite{K}.
Let $R$ be a ring, not necessarily with $\one$ and not
necessarily associative.  

\begin{Defs}
Let $x,y,z\in R.$
\begin{enumerate}
\item
The {\bf associator} $(x,y,z)$ is defined to be
\[
(x,y,z)=(xy)z-x(yz).
\]

\item
The {\bf commutator} $(x,y)$ is defined to be
\[
(x,y)=xy-yx.
\]

\item
$R$ is an {\bf alternative ring} if
\[
(x,y,y)=0=(y,y,x),
\]
for all $x,y\in R.$

\item
The {\bf nucleus} of $R$ is denoted $N$ and defined
\[
N=\{n\in R\mid (n,R,R)=0\}.
\]
Note that in an alternative ring the associator is skew symmetric in its $3$
variables (\cite[Lemma 1]{K}).  Hence   $(R,n,R)=(R,R,n)=0,$ for $n\in N.$

\item
The {\bf center} of $R$ is denoted $C$ and defined
\[
C=\{c\in N\mid (c,R)=0\}.
\]
\end{enumerate}
\end{Defs}

In the remainder of this paper {\bf $R$ is an alternative ring}
which is {\bf not associative}. 
$N$ denotes the nucleus of $R$ and $C$ its center.

The following lemma gives some properties of $R$ which we will require later.

\begin{lemma}\label{p of R}
Let $w,x,y,z\in R,$ and $n,n'\in N,$ then
\begin{enumerate}
\item
$(x,y,z)x=(x,xy,z)=(x,y,xz).$

\item
$x(x,y,z)=(x,yx,z)=(x,y,zx)$

\item
$(xn,y,z)=(nx,y,z)=(x,y,z)n=n(x,y,z).$

\item
$[N,R]\subseteq N.$

\item
Nuclear elements commute with associators.

\item
$(w,n)(w,n)(x,y,z)=0.$ 
\end{enumerate}
\end{lemma}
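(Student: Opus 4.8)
The strategy is to work through the six items essentially in order, since each later part leans on the earlier ones and on the basic identities already available in \cite{K}: full skew-symmetry of the associator, the Moufang/linearized-alternative identities, and the Teichm\"uller identity. For (1), I would start from the left-alternative and flexible laws in linearized form. Replacing one slot by a product and expanding associators of products is the standard move; concretely, expand $(xy,y,z)$ (which vanishes by skew-symmetry) or use the known identity $(x,xy,z)=x(x,y,z)+(x,y,z)x$-type relations, then combine with the right-alternative law to strip off the symmetric part. Part (2) is the mirror image of (1): apply the same manipulation on the other side, or simply invoke the opposite ring (which is again alternative) to deduce (2) from (1) formally.

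For (3), the key input is that $N$ is closed under the relevant operations and that associators are skew-symmetric. Write $(xn,y,z)$ and use the linearized alternative law together with $(n,R,R)=0$; the nuclear element can be ``pulled out'' of the associator because all the error terms that arise involve an associator with $n$ in one slot, hence vanish. The chain of four equalities in (3) then follows by applying this on the left, on the right, and using skew-symmetry to move $n$ between the outer factors. Part (4), $[N,R]\subseteq N$, is where I expect to actually compute: I would take $n\in N$, $r\in R$, and show $([n,r],R,R)=0$ by expanding $(nr-rn,x,y)$ using (3) and the Teichm\"uller identity, checking that every resulting term is an associator with a nuclear entry, hence zero. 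This is the one genuinely non-formal step, and it is the main obstacle: one must be careful that ``$n$ is nuclear'' is used in all three associator positions and that the flexible law is invoked correctly.

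Once (4) is in hand, (5) --- nuclear elements commute with associators --- should follow quickly: given $n\in N$ and an associator $a=(x,y,z)$, one shows $[n,a]=0$. Using (3), $na$ and $an$ are both expressible as associators with $n$ absorbed into a slot, e.g. $na=(nx,y,z)$ and $an=(xn,y,z)$, and then $(nx,y,z)=(xn,y,z)$ because $(n,x,\cdot)$-type corrections vanish; alternatively, combine (3) and (4) directly. Finally, for (6), the product $(w,n)(w,n)(x,y,z)=0$: here $(w,n)=wn-nw=[w,n]=-[n,w]\in N$ by (4), so $(w,n)$ is a nuclear element. Then $(w,n)(x,y,z)$ is again an associator (by (3), since multiplying an associator by a nuclear element stays an associator, with the nuclear factor absorbed), and multiplying once more by the nuclear element $(w,n)$ and invoking (5) or the skew-symmetry forces the expression to vanish --- the point being that $(w,n)$ commutes with the associator it multiplies while simultaneously annihilating it, or more precisely that $[N,R]\subseteq N$ together with $N\cap(\text{associator ideal})$ being ``square-zero'' against commutators. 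The cleanest route is: $(w,n)\in N$, so $(w,n)(x,y,z)=((w,n)x,y,z)$; then $(w,n)\cdot((w,n)x,y,z)=(w,n)(x',y,z)$ with $x'=(w,n)x$, and one shows this is zero using that $(w,n)$ is nuclear and anti-commutes with itself in the relevant sense, i.e. $(w,n)^2$ lies in the part of $N$ that kills associators. I would present (6) as a short corollary of (3), (4) and (5) rather than a fresh computation.
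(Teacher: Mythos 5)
Your treatment of (1)--(5) is essentially sound and matches the paper's route: the paper simply cites Kleinfeld's article for (1), (2) and (3), and then, exactly as you propose, reads off (4) from the equality $(nx,y,z)=(xn,y,z)$ (so $([n,x],y,z)=0$ by additivity in the first slot) and (5) from $n(x,y,z)=(x,y,z)n$, both of which are already contained in the chain of equalities in (3). Two small remarks there: the Teichm\"uller identity is not needed for (4) at all --- far from being ``the one genuinely non-formal step,'' (4) is a one-line consequence of (3) --- and your opposite-ring derivation of (2) from (1) is a legitimate alternative to citing the second identity separately.

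The genuine gap is item (6). Your argument reduces to the assertion that ``$(w,n)^2$ lies in the part of $N$ that kills associators,'' which is not a proof but a restatement of the claim; likewise ``anti-commutes with itself in the relevant sense'' has no content. Concretely: writing $c=(w,n)$ and $a=(x,y,z)$, parts (3)--(5) do give you $ca=(ca\text{ as associator})=(cx,y,z)$ and, after a short computation using $n\in N$ and $na=an$, the identity $ca=((wa),n)$; but iterating yields $c^2a=(c^2x,y,z)=c^2(x,y,z)$, which is circular, or $c^2a=(w(ca),n)$, which is not visibly zero. Nothing in (3)--(5) forces the product of the two nuclear elements $c$ and $ca$ to vanish; a further alternative-ring identity is required. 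The paper is consistent with this: it does \emph{not} derive (6) from the earlier parts but cites \cite[Lemma 2.4(5)]{KS}, where the statement is established by a separate computation. So you should either reproduce that computation or cite it; as written, your step (6) would fail.
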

\begin{proof}
(1\&2)\quad 
This is in \cite[equation (4), p.~130]{K}, and \cite[equation (5), p.~130]{K}, respectively.
\medskip

\noindent
(3)\quad
By \cite[equation (2), p.~129]{K}, $(nx,y,z)=n(x,y,z),$
and by the beginning of the proof of Lemma 4, p.~132 in \cite{K},  
$(nx,y,z)  =  (x,y,z)n$ and $(xn,y,z)  =  n(x,y,z).$
\medskip

\noindent
(4\&5)\quad
By (3), $(nx,y,z)=(xn,y,z),$ hence (4) holds, and also $n(x,y,z)=(x,y,z)n,$ so (5) holds.
\medskip

\noindent
(6)\quad
This is \cite[Lemma 2.4(5)]{KS}.
\end{proof}

\section{Proof of the Main Theorem.}
In this section $R$ is an alternative ring which is not associative.
We assume that 
\[
\text{non-zero associators are not zero-divisors in R.}
\]

\begin{lemma}\label{p of tr}
\begin{enumerate}
\item
$N$ is contained in the center of $R.$
So $N=C.$

\item
If $n\in N$ is non-zero then $n$ is not a zero divisor.

\item
If $x\in R$ and $x\notin N,$ then $x$ is not a zero divisor.
\end{enumerate}
\end{lemma}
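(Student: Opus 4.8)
The plan is to prove the three parts in turn, each by the same device: using an identity from Lemma~\ref{p of R} to produce an equation $u\cdot v=0$ in which $v$ is an associator and $u\neq0$. If $v\neq0$ this already contradicts the standing hypothesis that non-zero associators are not zero divisors; if $v=0$, one unwinds the identity to obtain a companion equation of the same shape that is manifestly non-trivial, again a contradiction. Since $R$ is not associative, I fix once and for all a non-zero associator $a_{0}$.

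For (1) I would take $n\in N$, $w\in R$ and set $m=(w,n)$. Lemma~\ref{p of R}(6) shows $m^{2}$ annihilates $a_{0}$, so $m^{2}=0$ (as $a_{0}$ is not a zero divisor), and $m\in N$ by Lemma~\ref{p of R}(4). Assume $m\neq0$. By Lemma~\ref{p of R}(3), $m\,a_{0}$ is again an associator, and $m(m\,a_{0})=m^{2}a_{0}=0$ because $(m,m,a_{0})=0$. If $m\,a_{0}\neq0$ the non-zero associator $m\,a_{0}$ is annihilated by $m\neq0$; if $m\,a_{0}=0$ then $a_{0}$ is annihilated by $m\neq0$; both contradict the hypothesis. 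Hence $(w,n)=0$ for all $w\in R$, so $N\subseteq C$; since $C\subseteq N$ always, $N=C$.

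For (2) I would take $n\in N$ with $n\neq0$, assume $n$ is a zero divisor, and — using that $n$ is now central — write $nb=0$ with $b\neq0$. If $b\notin N$, pick $y,z$ with $(b,y,z)\neq0$; then $n(b,y,z)=(nb,y,z)=0$ by Lemma~\ref{p of R}(3), so the non-zero associator $(b,y,z)$ is a zero divisor. If $b\in N$, then $b\,a_{0}$ is an associator by Lemma~\ref{p of R}(3) and $n(b\,a_{0})=(nb)a_{0}=0$ since $(n,b,a_{0})=0$; exactly as in (1), whether or not $b\,a_{0}=0$ one obtains a non-zero associator killed by a non-zero element. Both cases contradict the hypothesis, so $n$ is not a zero divisor.

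For (3) I would suppose $x\notin N$ is a zero divisor, say $xw=0$ with $w\neq0$ (the case $wx=0$ being symmetric). By (2), $w\notin N$. Lemma~\ref{p of R}(1) gives $(x,w,z)\,x=(x,xw,z)=0$ for all $z$; were some $(x,w,z)\neq0$ it would be a non-zero associator annihilated by $x$, so in fact $(x,w,z)=0$ and hence $x(wz)=-(x,w,z)=0$ for every $z$, i.e.\ $x(wR)=0$. Then I would choose $y',z'$ with $(w,y',z')\neq0$ and note, via Lemma~\ref{p of R}(2), that $w(w,y',z')$ equals the associator $(w,y'w,z')$ and lies in $wR$; therefore $x\,(w,y'w,z')=x\bigl(w(w,y',z')\bigr)=0$. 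If $(w,y'w,z')\neq0$ this is a non-zero associator annihilated by $x$; if $(w,y'w,z')=0$ then $w(w,y',z')=0$ shows the non-zero associator $(w,y',z')$ is annihilated by $w$. Either way contradicts the hypothesis, so $x$ is not a zero divisor.

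The bracketings $m(m\,a_{0})=m^{2}a_{0}$ and $(nb)a_{0}=n(b\,a_{0})$ are instances of a vanishing associator with a nuclear entry, and the passages identifying a product of a nuclear element with an associator as an associator are immediate from Lemma~\ref{p of R}(3); I treat these as routine. The hard part will be the final step of (3): having reduced to the statement that $x(wR)=0$, the non-obvious move is to invoke Lemma~\ref{p of R}(2) to rewrite $w$ times a non-zero associator as \emph{another} associator that still lies inside $wR$ — it is precisely this that feeds the closing dichotomy — and, a priori, one must also verify that the reductions in (3) really do close up without further case analysis.
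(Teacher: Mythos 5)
Your proof is correct; every step checks out against the identities of Lemma~\ref{p of R} and the standing hypothesis, and parts (1) and (2) run on essentially the same engine as the paper's proof (Lemma~\ref{p of R}(3),(4),(6) plus the observation that a nuclear multiple of an associator is again an associator), though the paper's part (2) is slicker: it avoids your case split on whether $b\in N$ by writing $0=(x,y,z)(nt)=\bigl((x,y,z)n\bigr)t=(xn,y,z)t$ directly, where $(xn,y,z)=(x,y,z)n\neq0$ forces $t=0$. The genuine divergence is in part (3). The paper exploits $x\notin N$ immediately: it picks $u=(x,y,z)\neq0$, notes $ux=(x,xy,z)\neq0$ is itself a nonzero associator, and examines the single associator $(u,x,t)=(ux)t$; if it vanishes then $(x,xy,z)t=0$ gives $t=0$, and if not then $(u,x,t)x=(u,x,xt)=0$ exhibits it as a zero divisor. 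Your route instead first kills all associators $(x,w,z)$ to conclude $x(wR)=0$, then uses part (2) to see $w\notin N$ and manufactures, via Lemma~\ref{p of R}(2), a nonzero associator $(w,y'w,z')=w(w,y',z')$ sitting inside $wR$. Both arguments are two applications of the same dichotomy trick; yours is slightly longer and makes part (3) depend on part (2) (harmlessly, since (2) is independent of (3)), while the paper's keeps the three parts independent and works entirely with associators anchored at $x$. Your closing worry about the symmetric case $wx=0$ is warranted but resolves exactly as you suggest, mirroring the argument with Lemma~\ref{p of R}(1) and (2) interchanged (together with skew-symmetry of the associator), just as the paper does in one line.
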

\begin{proof}
(1)\quad
This follows from Lemma \ref{p of R} parts (3), (4) and (6).
Indeed let $(x,y,z)\in R$ be a non-zero associator,
and let $(w,n)$ be a commutator, with $w\in R,$ and $n\in N.$
If $(w,n)\ne 0,$ 
then $(w,n)(x,y,z)\ne 0,$ but since $(w,n)\in N,$ we have  $(w,n)(x,y,z)=((w,n)x,y,z)\ne 0,$
and $(w,n)((w,n)x,y,z)=0,$ a contradiction.
\medskip

\noindent
(2)\quad
Let $n\in N$ be non-zero.  Suppose $tn=nt=0,$ for some $t\in R.$
Let $(x,y,z)$ be a non-zero associator.  Then $(x,y,z)n\ne 0,$ and using Lemma \ref{p of R}(3) we get 
\[
0=(x,y,z)(nt)=\big((x,y,z)n\big)t=(xn,y,z)t,
\]
so $t=0.$
\medskip

\noindent
(3)\quad
Let $x\in R$ such that $x\notin N.$  Assume that $xt=0,$ for some $t\in R.$
Since $x\notin N,$ there are $y,z\in R$ such that $u:=(x,y,z)\ne 0.$
Consider $w:=(u,x,t)=(ux)t$ (because $xt=0).$  
By Lemma \ref{p of R}(1) $ux=(x,xy,z)\ne 0.$
If $(u,x,t)= 0,$ we would get $(x,xy,z)t=0,$ so $t=0.$

Suppose $(u,x,t)\ne 0,$ so that $t\ne 0.$ Then, by Lemma \ref{p of R}(1),
$(u,x,t)x=(u,x,xt)=0,$ this is a contradiction. 

Similarly, if we assume that $tx=0,$ then arguing as above
replacing $w$ with $w'=(t,x,u),$ and using Lemma \ref{p of R}(2),
we get that $t=0.$
\end{proof}

\begin{proof}[Proof of the Main Theorem]\hfill
\medskip

\noindent
By Lemma \ref{p of tr}(2\&3), $R$ has no zero divisors, this shows part (a).
Part (b) follows from
\cite[Theorem A]{BK}.
\end{proof}

\begin{remark}\label{rem 3.2}
As usual, a non-zero element $x\in R$ is a {\it left zero divisor} if there
exists a non-zero $y\in R$ 
such that $xy=0.$ {\it Right zero divisors} are similarly defined.
In this paper A non-zero element in $R$ is a {\it zero divisor} if it is
either a left zero divisor or a right zero divisor.
  
Thus, when we say that an element is 
{\it not a zero divisor} we mean that it is both not a left zero
divisor and not a right zero divisor.

Note that our proof shows that if, in the Main Theorem, we would replace
``zero divisor'' by ``left zero divisor'', then part (a) of the theorem 
would still hold.  Similarly, part (a) would hold if we would replace
``zero divisor'' by ``right zero divisor''.
\end{remark}



\end{document}